%%%%%%%%%%%%%%%%%%%%%%%%%%%%%%%%%%%%%%%%%%%%%%%%%%%%%%%%%%%%%%%%%%%%%%
%%               This is the LaTeX2e file for                       %%
%%       Odd-symplectic forms via surgery and minimality            %%
%%                  in symplectic dynamics                          %%
%%                            by                                    %%
%%            Hansj\"org Geiges and Kai Zehmisch                    %%
%%                      15 November 2017                            %%
%%%%%%%%%%%%%%%%%%%%%%%%%%%%%%%%%%%%%%%%%%%%%%%%%%%%%%%%%%%%%%%%%%%%%%

\documentclass{amsart}

\usepackage{amsmath,amssymb,amsthm}

\usepackage{units} % This allows us to use nicefrac

\usepackage{pinlabel}

\hyphenation{mani-fold mani-folds sub-mani-fold sub-mani-folds topo-logy
Topo-logy geo-metry Geo-metry ana-lo-gous ana-lo-gous-ly An-omaly
se-parate}

\newtheorem{prop}{Proposition}[section]
\newtheorem{thm}[prop]{Theorem}
\newtheorem{lem}[prop]{Lemma}

\theoremstyle{definition}

\newtheorem{defn}[prop]{Definition}

\newtheorem{rem}[prop]{Remark}

\newtheorem*{ack}{Acknowledgements}

\newtheorem{quest}[prop]{Question}

%%%%%%%%%%%%%%%%%%%% Commands %%%%%%%%%%%%%%%%%%%%%%%%

\def\co{\colon\thinspace}

\newcommand{\alphast}{\alpha_{\mathrm{st}}}

\newcommand{\C}{\mathbb{C}}
\newcommand{\CP}{\mathbb{C}\mathrm{P}}

\newcommand{\rmd}{\mathrm{d}}

\newcommand{\rme}{\mathrm{e}}

\newcommand{\rmi}{\mathrm{i}}

\newcommand{\LL}{\mathcal{L}}

\newcommand{\omegast}{\omega_{\mathrm{st}}}
\newcommand{\calO}{\mathcal{O}}

\newcommand{\PD}{\mathrm{PD}}

\newcommand{\R}{\mathbb{R}}

\newcommand{\Rst}{R_{\mathrm{st}}}
\newcommand{\rot}{\mathtt{rot}}

\newcommand{\SO}{\mathrm{SO}}
\newcommand{\SU}{\mathrm{SU}}

\newcommand{\tb}{\mathtt{tb}}

\newcommand{\U}{\mathrm{U}}

\newcommand{\xist}{\xi_{\mathrm{st}}}

\newcommand{\Z}{\mathbb{Z}}
\newcommand{\oz}{\overline{z}}

\DeclareMathOperator{\Int}{Int}

%%%%%%%%%%%%%%%%%%%%%%%%%%%%%%%%%%%%%%%%%%%%%%%%%%%%%%%%%%%%%%%%%%%%%%
%%%%%%%%%%%%%%%%%%%%%%%%%%%%%%%%%%%%%%%%%%%%%%%%%%%%%%%%%%%%%%%%%%%%%%

\begin{document}

\author[H.~Geiges]{Hansj\"org Geiges}
\address{Mathematisches Institut, Universit\"at zu K\"oln,
Weyertal 86--90, 50931 K\"oln, Germany}
\email{geiges@math.uni-koeln.de}
\author[K.~Zehmisch]{Kai Zehmisch}
\address{Mathematisches Institut, WWU M\"unster,
Einstein\-stra\-{\ss}e 62, 48149 M\"unster, Germany}
\email{kai.zehmisch@uni-muenster.de}

\title[Odd-symplectic forms and minimality]{Odd-symplectic
forms via surgery and minimality in symplectic dynamics}

\date{}

\begin{abstract}
We construct an infinite family of odd-symplectic forms
(also known as Hamiltonian structures) on the $3$-sphere $S^3$ that do not
admit a symplectic cobordism to the standard contact structure
on~$S^3$. This answers in the negative a question raised by
Joel Fish motivated by the search for minimal characteristic
flows.
\end{abstract}

%\subjclass[2010]{37J05, 53D35}

%\keywords{Hamiltonian system, Odd-symplectic form, Minimality of flows,
%Contact surgery, Symplectic cobordism}

\thanks{The authors are partially supported by the SFB/TRR 191
`Symplectic Structures in Geometry, Algebra and Dynamics', funded by the
Deutsche Forschungsgemeinschaft.}

\maketitle

%%%%%%%%%%%%%%%%%%%%%%%%%%%%%%%%%%%%%%%%%%%%%%%%%%%%%%%%%%%%%%%%%%%%%%

\section{Introduction}
The plugs for Hamiltonian flows constructed by
V.~Ginzburg~\cite{ginz95,ginz97} and M.~Herman~\cite{herm99,herm}
allow one to produce smooth Hamiltonian flows without periodic orbits
on compact hypersurfaces in $\R^{2n}$, $n\geq 3$. For an alternative
construction of Hamiltonian plugs and a guide to the more recent
literature on the subject see~\cite{grz16}.

The existence of aperiodic Hamiltonian flows prompted Herman at his
1998 ICM address~\cite{herm98} to raise the question whether one can find
compact, connected hypersurfaces in $\R^{2n}$, $n\geq 2$,
on which the characteristic flow is not only aperiodic, but
\emph{minimal}, that is, where every orbit is dense.
For $n=2$, this question has recently been answered in the negative by
J. Fish and H. Hofer, see~\cite{fish15}.

\begin{thm}[Fish--Hofer]
\label{thm:Fish-Hofer}
Let $H$ be a smooth and proper Hamiltonian on~$\R^4$. Then no
energy level of $H$ is minimal.
\end{thm}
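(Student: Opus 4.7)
The plan is to realise the energy level $\Sigma=H^{-1}(c)$ as a compact, connected hypersurface in the standard symplectic $(\R^4,\omegast)$; its characteristic line field is then the kernel of $\omegast|_\Sigma$, and minimality asks that every orbit of this line field be dense in~$\Sigma$. The strategy I would pursue is to exhibit, via pseudoholomorphic curve methods, a non-empty proper closed subset of $\Sigma$ that is invariant under the characteristic flow, which is incompatible with minimality.

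Since $H$ is proper, $\Sigma$ is compact and separating; let $U$ be its bounded filling. Choose an $\omegast$-tame almost complex structure $J$ on $\R^4$ that is translation-invariant in a collar neighbourhood of $\Sigma$ modelled on part of its symplectisation. Begin with a natural family of $J$-holomorphic curves in~$U$---for instance, a Bishop-type family emanating from a totally real disk, or disks with boundary on a small Lagrangian torus that is moved around---and deform the family so that the curves are pushed close to $\Sigma$. Because one does not need to close the flow up but only to see it, it is enough to work directly in $\R^4$ rather than to pass to any contact-type completion.

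The main analytic input is a compactness statement. Standard Gromov compactness fails because the symplectic area of curves in the family is a priori unbounded even while the Hofer energy stays controlled. This is where the Fish--Hofer theory of \emph{feral} pseudoholomorphic curves enters: one extracts a proper $J$-holomorphic map $u\co\dot F\to\overline U$ from a (possibly non-compact) punctured Riemann surface of finite Hofer energy whose image accumulates on~$\Sigma$. Its trace
\[
\Lambda(u)=\bigcap_{K\subset\dot F\text{ compact}}\overline{u(\dot F\setminus K)}\cap\Sigma
\]
is a non-empty compact subset of~$\Sigma$, and translation-invariance of $J$ near $\Sigma$ forces $\Lambda(u)$ to be saturated under the characteristic flow.

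The hardest step---and the true depth of the result---is to show that $\Lambda(u)$ is a \emph{proper} subset of~$\Sigma$. This requires a fine asymptotic analysis of feral curves, giving for instance an upper bound on the Hausdorff dimension of $\Lambda(u)$, or a trapping/volume estimate ruling out that the curve fills all of $\Sigma$. Granted this, $\emptyset\neq\Lambda(u)\subsetneq\Sigma$ is a non-trivial closed invariant subset of the characteristic flow on~$\Sigma$, contradicting minimality. The main obstacle is thus entirely analytic: building the feral compactness theory in the infinite-area regime and establishing non-filling of the limit set.
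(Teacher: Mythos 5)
The paper does not prove Theorem~\ref{thm:Fish-Hofer} at all: it is quoted from Fish--Hofer \cite{fish15} as the motivation for Question~\ref{quest:Fish}, and the introduction only records the shape of their argument (compactify $(\R^4,\omegast)$ to $\CP^2$ with the Fubini--Study form, use Gromov's results on degree-one holomorphic spheres there, and obtain a nontrivial minimal set of characteristics as the limit set of the ends of a feral curve). So there is no in-paper proof to match your attempt against; what you have written is a strategy outline rather than a proof, and it should be measured against the Fish--Hofer scheme just described.

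Measured that way, your outline has the right skeleton (a feral curve, the limit set of its ends as a non-empty closed invariant subset, and the difficulty of showing that this subset is proper), but two of your choices would fail. First, you explicitly decline to compactify and instead want to start from a Bishop-type family or from disks on a small Lagrangian torus inside the bounded component $U$. The passage to $\CP^2$ is not optional: it supplies a family of \emph{closed} holomorphic curves of uniformly bounded symplectic area (the degree-one spheres, i.e.\ Gromov's lines) passing through prescribed points on both sides of $\Sigma$, and it is neck-stretching applied to these spheres that produces a feral curve with ends accumulating on $\Sigma$ together with the a priori energy bounds on which the whole compactness scheme rests. A Bishop family confined to $U$ provides neither the curves nor the bounds. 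Second, your collar ``modelled on part of the symplectisation'' with a translation-invariant $J$ tacitly assumes that $\Sigma$ is of contact type, or at least stable Hamiltonian; a general regular energy level in $\R^4$ is neither, and removing exactly this hypothesis is the raison d'\^etre of the feral-curve theory (otherwise standard SFT compactness would already apply). In particular, the invariance of $\Lambda(u)$ under the characteristic flow is itself something to be proved, not a consequence of translation invariance. Finally, you correctly identify that $\Lambda(u)\subsetneq\Sigma$ is the heart of the matter, but you only name candidate mechanisms (a Hausdorff-dimension bound, a trapping or volume estimate) without giving an argument, so the central step of the proof is missing.
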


In this context, Fish has posed a question concerning the
existence of certain symplectic cobordisms, very much in the
spirit of symplectic dynamics as defined in~\cite{brho12}.
Following~\cite{ginz92}, we call a closed $2$-form $\omega$
on a $(2n-1)$-dimensional manifold $M$ an \emph{odd-symplectic form}
if $\omega$ is of rank~$2n-2$, that is, if it has a $1$-dimensional
kernel. The terminology `Hamiltonian structure' is also in use,
see~\cite{civo15}. We shall always assume $M$ to be oriented;
equivalently, the characteristic line bundle $\ker\omega$
is oriented.

In the following question, the $3$-sphere $S^3$ is given its
standard orientation as the unit sphere in~$\R^4$. The contact structure
$\xi$ on $S^3$ is assumed to be positive, that is, $\alpha\wedge\rmd\alpha$
is a positive volume form for any choice of contact form $\alpha$
with $\ker\alpha=\xi$. Moreover, for condition (ii) below
it is assumed that a coorientation and hence orientation for
$\xi$ has been chosen by fixing $\alpha$ up to multiplication by
a positive function.  The symplectic $4$-manifold $(W,\Omega)$
is oriented by the volume form~$\Omega\wedge\Omega$, and
the boundary $\partial W$ of $W$ is given the induced orientation.

\begin{quest}[Fish]
\label{quest:Fish}
Let $\omega$ be an odd-symplectic form on the $3$-sphere $S^3=:S^3_-$.
Can one find a contact structure $\xi$ on $S^3=:S^3_+$ and a
compact symplectic $4$-manifold $(W,\Omega)$ such that
$\partial W=S^3_+-S^3_-$ and
\begin{itemize}
\item[(i)] $\Omega|_{TS^3_-}=\omega$,
\item[(ii)] $\Omega|_{\xi}>0$?
\end{itemize}
\end{quest}

\begin{rem}
With the help of open books or by
contact surgeries one can build a symplectic cobordism from
$(S^3_+,\xi)$ to $(S^3,\xist)$, i.e.\ the $3$-sphere with
its standard contact structure, see~\cite[Theorem~1.2]{etho02},
\cite[Theorem~3.4]{geze13} ---
this statement hinges on the fact that, as a consequence of Eliashberg's
classification of contact structures on the $3$-sphere,
$\xi$ is either diffeomorphic
to~$\xist$, or it is an overtwisted contact structure.
By concatenating this with a purported cobordism from $(S^3_-,\omega)$
to $(S^3_+,\xi)$, one would obtain one from $(S^3_-,\omega)$
to $(S^3,\xist)$. Thus, we may assume that $(S^3_+,\xi)=(S^3,\xist)$
in Fish's question.
\end{rem}

Here is a brief explanation of the relevance of Question~\ref{quest:Fish}.
Given a compact, connected hypersurface $M$
in standard symplectic space $(\R^{2n},\omegast)$,
the pull-back $\omega:=\omegast|_{TM}$ of
the symplectic form $\omegast$ to~$M$ is odd-symplectic.
The \emph{characteristics} of $(M,\omega)$
are the integral curves of the line field on $M$ defined by $\ker\omega$.
If $M$ is written as the regular level set of a smooth
function $H\co\R^{2n}\rightarrow\R$, the characteristics coincide
with the flow lines of the Hamiltonian vector field $X_H$
defined by $\omegast(X_H,\,.\,)=-\rmd H$, no matter what choice of~$H$.
Thus, up to parametrisation, the Hamiltonian dynamics is
encoded in the odd-symplectic form on the hypersurface.

For a level set $H^{-1}(c)$ as in Theorem~\ref{thm:Fish-Hofer},
a cobordism as sought for in Question~\ref{quest:Fish}
exists: simply enclose $H^{-1}(c)$ inside a large sphere and take
$(W,\Omega)$ to be the part of $(\R^4,\omegast)$ bounded by these
two hypersurfaces.

In order to prove Theorem~\ref{thm:Fish-Hofer}, Fish and Hofer
compactify $(\R^4,\omegast)$ to the complex projective plane
$\CP^2$ with its Fubini--Study symplectic form. They then use
results of Gromov on the existence of pseudoholomorphic spheres
in $\CP^2$ to find a nontrivial minimal set of characteristics
in $H^{-1}(c)$ as the limit set of ends of
what they call feral pseudoholomorphic curves.

For a general odd-symplectic manifold $(M^3,\omega)$, having
a cobordism as in Question~\ref{quest:Fish} might allow one to carry over 
parts of this argument. That such a strategy might be viable
is exemplified by Hofer's proof~\cite{hofe93}
of the Weinstein conjecture for overtwisted contact structures.
As explained in \cite[Corollary~3.5]{geze13}, in this
situation the existence of a periodic Reeb orbit (so in particular
the non-minimality of the Reeb flow) follows
from a study of pseudoholomorphic curves in a
symplectic cobordism from the
given overtwisted contact $3$-manifold to $(S^3,\xist)$.

Alas, the main result of this note says that
the answer to Question~\ref{quest:Fish} is negative, in general.

\begin{thm}
\label{thm:main}
There is an infinite family of odd-symplectic forms on $S^3$,
distinguished by a homotopical invariant, for which a cobordism
as in Question~\ref{quest:Fish} does not exist.
\end{thm}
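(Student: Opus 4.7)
My plan is to exhibit an infinite family $\{\omega_n\}_{n\in\Z}$ of odd-symplectic forms on $S^3$ by a local surgery construction, to distinguish them by the homotopy class of $\ker\omega_n$, and to obtain a contradiction with the existence of the cobordism by comparing a suitable $d_3$ invariant with Gompf's formula applied to a compact symplectic filling.

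Beginning with the standard odd-symplectic form $\omegast|_{TS^3}$, whose characteristic foliation is the Hopf fibration, I would perform a surgery supported in a Darboux ball that twists the characteristic line field. Using the Lie-group framing of $TS^3$, any oriented line subbundle corresponds to a map $S^3\to S^2$, and the Hopf invariant of this map in $\pi_3(S^2)\cong\Z$ classifies oriented line fields up to homotopy. The surgery is designed to shift this invariant by $n$ while preserving closedness and maximal rank of the form; the resulting Hopf invariant of $\ker\omega_n$ is the homotopical invariant of the theorem.

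Assume for contradiction that a cobordism $(W,\Omega)$ as in Question~\ref{quest:Fish} exists. By the remark we may take the positive end to be $(S^3,\xist)$; capping that end with $(D^4,\omegast)$ produces a compact symplectic $4$-manifold $(W',\Omega')$ with $\partial W' = -S^3$ and $\Omega'|_{TS^3}=\omega_n$. Choose an $\Omega'$-compatible almost complex structure~$J$; then $\xi_J := TS^3\cap J(TS^3)$ is a $2$-plane field on $S^3$ that is a symplectic complement of $\ker\omega_n$, and is well defined up to homotopy since the space of compatible $J$'s is contractible. Its $d_3$ invariant is in fixed correspondence with the Hopf invariant of $\ker\omega_n$ and therefore grows linearly in~$n$. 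Gompf's formula then yields
\[
d_3(\xi_J) \;=\; \tfrac{1}{4}\bigl(c_1(W',J)^2 - 3\sigma(W') - 2\chi(W')\bigr),
\]
with $c_1^2$ interpreted as a relative self-intersection via the $\xi_J$-trivialisation on the boundary.

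The chief obstacle is to bound the right-hand side uniformly over all admissible $(W',\Omega')$, for otherwise the equation poses no obstruction. The strategy is to exploit the hybrid nature of the boundary — a strong convex filling of $(S^3,\xist)$ on one side, odd-symplectic on the other — by invoking McDuff's classification of symplectic fillings of $(S^3,\xist)$ on the capped contact end, together with a rigidity argument that controls $\chi(W')$, $\sigma(W')$, and $c_1(W',J)^2$. Once such uniform boundedness is in place, the unbounded growth of $d_3(\xi_J)$ in $n$ contradicts the bounded right-hand side for $|n|$ sufficiently large, yielding the infinite sub-family of $\omega_n$ that establishes the theorem.
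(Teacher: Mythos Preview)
Your proposal has two genuine gaps, one in each half of the argument.

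\textbf{Construction.} A ``local surgery in a Darboux ball that twists the characteristic line field while preserving closedness and maximal rank'' is not something you can simply declare. Changing the homotopy class of the line field $\ker\omega$ by a compactly supported modification of a \emph{closed} $2$-form is a nontrivial task; locally $\omega=\rmd\alpha$, and modifying $\alpha$ inside a ball does not obviously realise an arbitrary shift of the Hopf invariant. The paper's construction is quite different and does not start from $\omegast|_{TS^3}$: it begins with $(S^1\times S^2,\omega_{S^2})$, identifies a tubular neighbourhood of a fibre with a contact solid torus, and performs honest contact $(-1)$-surgery along a Legendrian unknot $L_{n,r}$ there. This simultaneously produces $S^3$ topologically and yields an explicit symplectic cobordism $(W_{n,r},\Omega_{n,r})$ from $(S^1\times S^2,\omega_{S^2})$ to $(S^3,\omega_{n,r})$. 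The $d_3$-invariant is then computed from this concrete handlebody.

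\textbf{Non-existence.} The more serious problem is your contradiction scheme. Gompf's formula
\[
d_3(\xi_J)=\tfrac{1}{4}\bigl(c_1^2(W',J)-3\sigma(W')-2\chi(W')\bigr)
\]
is a \emph{theorem}, not a constraint: it is automatically satisfied by \emph{every} almost complex filling of $(S^3,\xi_J)$. If a cobordism exists for each $n$, the resulting $W'_n$ will simply have characteristic numbers obeying this identity, and nothing is violated. Your plan therefore hinges entirely on the ``uniform bound'' you acknowledge as the chief obstacle, but the mechanism you suggest cannot supply it: McDuff's classification concerns fillings of the \emph{contact} boundary $(S^3,\xist)$, which you have already capped off; the remaining boundary $(S^3,\omega_n)$ is merely odd-symplectic, and there is no rigidity theorem constraining the topology of symplectic manifolds with such a boundary. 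No ``rigidity argument'' of the type you invoke is known, and indeed one should not expect uniform control of $\chi$, $\sigma$, $c_1^2$ here.

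The paper's argument avoids this entirely. The key is that the specific construction furnishes, for each $(n,r)$, a symplectic cobordism from $(S^1\times S^2,\omega_{S^2})$ to $(S^3,\omega_{n,r})$. A separate lemma provides a symplectic cobordism from the empty set to \emph{two} copies of $(S^1\times S^2,\omega_{S^2})$. Gluing a copy of the surgery cobordism to each end, and then---assuming Fish's cobordism existed---a copy of that to each resulting $(S^3,\omega_{n,r})$ boundary, one obtains a connected weak symplectic filling of $(S^3,\xist)\sqcup(S^3,\xist)$. This contradicts McDuff's theorem that a weak filling with one boundary component $(S^3,\xist)$ has connected boundary. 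Note that the $d_3$-computation plays no role in the non-existence step; it serves only to show the family is infinite.
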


We describe an explicit construction of this infinite family of
examples; the homotopical invariant in question and
how to compute it will be explained in the process. We shall
also discuss the dynamics of these odd-symplectic forms in greater detail.

An earlier result in this direction is due to K.~Cieliebak and
E.~Volkov, see~\cite[Corollary~6.21]{civo15}. They exhibit
an example of an odd-symplectic form $\omega$ on $S^3$
for which there is no topologically trivial symplectic cobordism
to an odd-symplectic form on $S^3$ whose characteristics
are given by the Reeb flow of the standard contact form.
The latter condition is not actually more restrictive than what is
required by Question~\ref{quest:Fish}, as a construction by
Eliashberg~\cite{elia04} allows one to modify
the cobordism accordingly, cf.\ step (iii) in the proof of
\cite[Theorem~6]{geig06}. Topological triviality of the
cobordism, however, is not assumed in our Theorem~\ref{thm:main}.

The example of Cieliebak and Volkov arises as the boundary of an
exotic symplectic ball (containing an exact Lagrangian $2$-torus),
and the non-existence of the desired
cobordism is shown by appealing to Gromov's uniqueness theorem
for symplectic structures on $\R^4$ standard at infinity.
In other words, their example relies on two deep results
of $4$-dimensional symplectic topology. Our infinite list of
examples, by contrast, is constructed by an `elementary'
surgical procedure, and the non-existence of a symplectic cobordism
with the described properties follows from McDuff's result~\cite{mcdu91},
cf.~\cite[Corollary~3.2]{geze13},
which says that if $(S^3,\xist)$ arises as one boundary component
of a compact symplectic $4$-manifold with weakly convex boundary,
then the boundary is in fact connected.
\section{Idea of the construction}
We start with the product manifold $S^1\times S^2$ and the obvious
odd-symplectic form $\omega_{S^2}$ obtained by pulling back
the standard area form from $S^2$ (of area~$4\pi$). In a neighbourhood of
a circle $S^1\times\{*\}$ we can realise $\omega_{S^2}$
as an exact form $\rmd\alpha$ for some contact form~$\alpha$.
We then perform contact surgery, see \cite[Chapter~6]{geig08},
inside this neighbourhood along a Legendrian knot that is
topologically isotopic to $S^1\times\{*\}$. This surgery,
interpreted as a handle attachment, produces
a symplectic cobordism (in a sense that we shall specify)
from $(S^1\times S^2,\omega_{S^2})$ to the surgered
odd-symplectic manifold.

In Section~\ref{section:surgery} we are going to describe this
in detail and show that the surgered manifold is a $3$-sphere.
The odd-symplectic forms $\omega$ on $S^3$ produced in this way
(by choosing a Legendrian knot from an infinite family) can be
distinguished by the Hopf invariant of the oriented line
field $\ker\omega\subset TS^3$. We use a version of this
invariant due to R.~Gompf~\cite{gomp98}, which can be computed from
an almost complex structure $J=J_{\omega}$ on a filling $W_{\omega}$
of $(S^3,\omega)$, that is, a compact manifold with boundary
$\partial W_{\omega}=S^3$, and with $\omega>0$ on the
$J$-invariant tangent $2$-plane field $TS^3\cap J(TS^3)$.
This so-called $d_3$-invariant, which has the advantage not to depend
on a choice of trivialisation of the tangent bundle $TS^3$,
will be described in Section~\ref{section:d3}, and in
Section~\ref{section:d3-examples} we compute it for our
examples.

The non-existence of a symplectic cobordism (as
in Question~\ref{quest:Fish}) from any of these $(S^3,\omega)$
to $(S^3,\xist)$ will be shown in Section~\ref{section:proof}.
Assuming there were such a cobordism, we could concatenate it
with the surgery cobordism to obtain a cobordism from
$(S^1\times S^2,\omega_{S^2})$ to $(S^3,\xist)$. A result
from \cite{geze17} about symplectic cobordisms between symplectic
fibrations would allows us to connect two copies of this
cobordism by a symplectic cobordism between the
two boundary components $(S^1\times S^2,\omega_{S^2})$,
or better: by a symplectic cobordism from the empty set to
these two manifolds.
The resulting manifold would be a connected weak symplectic filling
of the disjoint union of two copies of $(S^3,\xist)$,
contradicting a result of McDuff~\cite{mcdu91}.

Finally, in Section~\ref{section:dynamics} we discuss the
dynamics of these examples. After a slight modification of
the odd-symplectic form, which does not affect the property
of them not being symplectically cobordant to
$(S^3,\xist)$, the minimality or otherwise of these examples
is an open problem.

\begin{rem}
According to a conjecture due to W.~Gottschalk,
there are no minimal flows on $S^3$ whatsoever. From this
perspective it is worth noting that our construction allows one
to produce odd-symplectic manifolds not diffeomorphic to $S^3$ 
for which a symplectic cobordism to $(S^3,\xist)$ as in
Question~\ref{quest:Fish} does not
exist. For instance, one can replace $S^1\times S^2$
in the construction by $S^1\times\Sigma_g$, with $\Sigma_g$ the
orientable surface of genus~$g$.

There are very few examples of minimal flows on closed $3$-manifolds:
irrational flows on the $3$-torus and on nilmanifolds,
and the horocycle flow on the unit tangent bundle of
compact hyperbolic surfaces, see~\cite{ahm63}, \cite[Section~2.2]{ghys92},
\cite[Section~4]{mark71}.
\end{rem}
\section{Surgical description of the examples}
\label{section:surgery}
We begin with the topological aspect of the surgery.

\begin{lem}
\label{lem:surgery-top}
Surgery along $S^1\times\{*\}\subset S^1\times S^2$ with integral framing
produces~$S^3$.
\end{lem}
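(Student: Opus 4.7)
The plan is to realise the surgered manifold as a genus-one Heegaard splitting in which the two meridian curves meet exactly once on the splitting torus, which forces it to be~$S^3$. Writing $S^2=D_+\cup_{S^1}D_-$ as the union of two hemispheres, one decomposes $S^1\times S^2=V_+\cup_T V_-$ with $V_\pm=S^1\times D_\pm$ solid tori meeting along the Heegaard torus $T=S^1\times S^1$. A natural basis of $H_1(T)$ is furnished by the meridian $m=\{\mathrm{pt}\}\times S^1$ (which bounds a disk in both $V_\pm$) and the longitude $\ell=S^1\times\{\mathrm{pt}\}$.

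Choosing $*\in\Int D_+$, the knot $K=S^1\times\{*\}$ is isotopic to the core of~$V_+$, and its product push-off $S^1\times\{*'\}$ with $*'\neq *$ represents the canonical $0$-framing; on $T$ this push-off lies in the class~$\ell$. For integer framing $n$, the Dehn surgery removes $V_+$ and glues back a new solid torus $V_+'$ whose meridian disk is attached along the $n$-framed curve $\ell+nm$ on~$T$. The resulting manifold $V_+'\cup_T V_-$ is a genus-one Heegaard splitting whose meridian classes $\ell+nm$ and $m$ have algebraic intersection
\[
(\ell+nm)\cdot m=\ell\cdot m=\pm 1
\]
on~$T$, independently of~$n$. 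Since two solid tori glued along a boundary torus so that their meridian circles meet exactly once form the standard genus-one Heegaard splitting of~$S^3$, the claim follows.

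The only delicate point, hardly a real obstacle, is verifying that the product longitude of $K$ corresponds to the class $\ell$ on~$T$ and hence determines the canonical $0$-framing of~$K$. This is immediate from the product structure of $S^1\times S^2$, but it is precisely what guarantees that the conclusion holds for every integer framing and not merely for one special value of~$n$.
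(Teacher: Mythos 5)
Your argument is correct and follows essentially the same route as the paper: split $S^1\times S^2$ into two solid tori along the Heegaard torus, observe that the surgery replaces the meridian-to-meridian gluing by one sending the new meridian to $\ell+nm$, and recognise the result as $S^3$. The only (cosmetic) difference is the final step: you invoke the criterion that a genus-one splitting whose meridians meet algebraically once is $S^3$, whereas the paper normalises the gluing curve to $\lambda_1$ by a meridional Dehn twist of the complementary solid torus before quoting the standard splitting of $S^3$.
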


\begin{proof}
We split $S^1\times S^2$ into two solid tori $V_0,V_1$, with $V_0$
a tubular neighbourhood of $S^1\times\{*\}$. These solid tori
carry canonical longitudes $\lambda_0,\lambda_1$, respectively,
given by (the class of) a fibre in the fibration
$S^1\times S^2\rightarrow S^2$. The respective meridians will be denoted by
$\mu_0,\mu_1$. The gluing of $V_0,V_1$ that gives $S^1\times S^2$
is described by the identification $\mu_0=-\mu_1$, $\lambda_0=\lambda_1$.

Surgery along $S^1\times\{*\}$ with integral framing means that we
cut out $V_0$ and reglue a solid torus $S^1\times D^2$ by sending
its meridian $\mu:=\{*\}\times\partial D^2$ to
\[ k\mu_0+\lambda_0 = -k\mu_1+\lambda_1\]
for some $k\in\Z$. Since there is a diffeomorphism of
$V_1$ that sends $-k\mu_1+\lambda_1$ to $\lambda_1$ (a $k$-fold
right-handed Dehn twist along a meridional disc), this gluing
amounts to the same as sending $\mu$ to~$\lambda_1$,
which produces~$S^3$.
\end{proof}

Identify the solid torus $V_0$ with $S^1\times D^2\subset S^1\times S^2$,
and write $\theta$ for the $S^1$-coordinate. Recall that
the Reeb vector field $R$ of a contact form $\alpha$ is defined
by the conditions $\rmd\alpha(R,\,.\,)=0$ and $\alpha(R)=1$.
We now want to choose a contact form $\alpha_0$ on $V_0$ with Reeb vector
field $R_0=\partial_{\theta}$ and $\rmd\alpha_0=\omega_{S^2}$.
Of course, we could simply set $\alpha_0=\rmd\theta+(x\,\rmd y-y\,\rmd x)/2$,
with $x,y$ cartesian coordinates on~$D^2$. For the computation of
the classical invariants of Legendrian knots inside the contact
manifold $(V_0,\ker\alpha_0)$, however, it is more convenient
to define $\alpha_0$ by an identification of $V_0$ with a
subset of $(S^3,\alphast)$, where
\begin{eqnarray*}
\alphast & := & \frac{\rmi}{2}(z_1\,\rmd\oz_1-\oz_1\,\rmd z_1+
                   z_2\,\rmd\oz_2-\oz_2\,\rmd z_2)\\
         & =  & r_1^2\,\rmd\varphi_1+ r_2^2\,\rmd\varphi_2\\
         & =  & x_1\,\rmd y_1-y_1\,\rmd x_1+x_2\,\rmd y_2-y_2\,\rmd x_2
\end{eqnarray*}
is the standard contact form on $S^3\subset\C^2$
with coordinates $z_j=r_j\rme^{\rmi\varphi_j}=x_j+\rmi y_j$, $j=1,2$.
This is the contact form that defines the standard contact
structure $\xist=\ker\alphast$. Its Reeb vector field
is $\Rst=\partial_{\varphi_1}+\partial_{\varphi_2}$.
% When we write $J_0$ for the complex bundle structure on $T\R^4$
% induced by the identification of $\R^4$ with~$\C^2$, we have
% \[ \xist=TS^3\cap J_0(TS^3),\]
% that is, $\xist$ is the $J_0$-invariant tangent $2$-plane field on~$S^3$.

The vector field $\Rst$ defines the Hopf fibration
\[ \begin{array}{ccc}
\C^2\supset S^3 & \longrightarrow & S^2=\CP^1\\
(z_1,z_2)       & \longmapsto     & [z_1:z_2].
\end{array} \]
The $1$-form $\alphast$ is a connection form on this
principal $S^1$-bundle and $\rmd\alphast$ is the
pull-back of a symplectic form on $S^2$ of total area~$2\pi$,
cf.\ the discussion of the Boothby--Wang construction
in~\cite[Section~7.2]{geig08}.

Since the total area of the area form $\omega_{S^2}$ on $S^2$
is $4\pi$, we may identify $V_0=S^1\times D^2$ with the solid torus
\[ V_{S^3}:=\bigl\{ (z_1,z_2)\in S^3\co |z_2|\leq\nicefrac{3}{4}\bigr\}\]
in $S^3$ such that
\begin{itemize}
\item[(i)] $S^1\times\{0\}$ is sent to the Reeb orbit
\[\bigl\{ (\rme^{\rmi\theta},0)\in S^3\co\theta\in S^1=\R/2\pi\Z\bigr\} \]
of $\alphast$,
\item[(ii)] $\{0\}\times D^2\subset S^1\times D^2$ with area form
$\omega_{S^2}$ is sent area-preservingly to a transverse disc to
the flow of $\Rst$ with area form $\rmd\alphast$, and
\item[(iii)] the flow lines of $\partial_{\theta}$ are sent to
those of $\Rst$ (with the parametrisations given by these
respective vector fields).
\end{itemize}
Then define the contact form $\alpha_0$ on $V_0$ as the
pull-back of $\alphast$ under this identification.
By construction, the vector field $\partial_{\theta}$ is
the Reeb vector field of~$\alpha_0$, so $\rmd\alpha_0$ is invariant under the
flow of~$\partial_{\theta}$, and by (ii) we then have
$\rmd\alpha_0=\omega_{S^2}$ on~$V_0$.

We now want to use this identification to describe Legendrian knots
in $V_0$ topologically isotopic to $S^1\times\{0\}$. We call the
Reeb orbit in (i) the \emph{spine} of~$V_{S^3}$.
For the definition of the classical invariants $\tb$ (Thurston--Bennequin
invariant) and $\rot$ (rotation number) of Legendrian knots
see~\cite[Chapter~3]{geig08}.

\begin{lem}
The closed curve
\[ \gamma\co t\longmapsto \frac{1}{\sqrt{2}}\,
(\rme^{\rmi t},\rme^{-\rmi t}),\;\;\; t\in S^1=\R/2\pi\Z,\]
defines an oriented Legendrian knot $L=\gamma(S^1)$ in $(V_{S^3},\xist)$
topologically isotopic in $V_{S^3}$ to the spine of that solid
torus (and hence a topological unknot in~$S^3$). The classical invariants
of this Legendrian unknot, regarded as a knot
in $(S^3,\xist)$, are $\tb(L)=-1$ and $\rot(L)=0$.
\end{lem}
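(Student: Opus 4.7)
The plan is to verify the four assertions of the lemma in order: Legendrian condition, topological type, Thurston--Bennequin invariant, and rotation number.

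\emph{Legendrian condition and unknottedness.} A direct pull-back computation using $\alphast = r_1^2 \rmd\varphi_1 + r_2^2 \rmd\varphi_2$ along $\gamma$ (where $r_1 = r_2 = 1/\sqrt{2}$ and $\varphi_1 = t = -\varphi_2$) gives $\gamma^*\alphast = \tfrac{1}{2}\rmd t - \tfrac{1}{2}\rmd t = 0$, so $\gamma$ is Legendrian. For the unknottedness, I note that $L$ lies on the Clifford torus $T = \{r_1 = r_2 = 1/\sqrt 2\}$, and in the longitude/meridian basis $(\lambda,\mu)$ of $V_{S^3}$ corresponding to $(\partial_{\varphi_1}, \partial_{\varphi_2})$ — so that $\lambda$ is parallel to the spine and $\mu$ bounds a meridional disc in $V_{S^3}$ — the curve $L$ has slope $(1,-1)$. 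The explicit isotopy
\[
 A(s,t) = \bigl(\cos((1-s)\pi/4)\,\rme^{\rmi t},\; \sin((1-s)\pi/4)\,\rme^{-\rmi t}\bigr),\quad s \in [0,1],
\]
traces an embedded annulus in $V_{S^3}$ from $L$ to the spine, and the spine is a great circle, hence unknotted in~$S^3$.

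\emph{Thurston--Bennequin invariant.} Since $\alphast$ has no $\rmd r_j$ terms, the vector $\partial_{r_1} - \partial_{r_2}$ lies in $\ker\alphast$; it is tangent to $S^3$ and transverse to $\gamma'(t) = \partial_{\varphi_1} - \partial_{\varphi_2}$, so it spans the contact framing direction along $L$. This direction is precisely the $S^3$-normal to the Clifford torus, whence the contact pushoff
\[
 L_\varepsilon(t) = \bigl(\cos(\pi/4 - \varepsilon)\,\rme^{\rmi t},\; \sin(\pi/4 - \varepsilon)\,\rme^{-\rmi t}\bigr)
\]
sits on an adjacent parallel torus as a slope-$(1,-1)$ curve. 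I would then invoke the standard fact that two parallel slope-$(p,q)$ curves on adjacent Heegaard tori in $S^3$ have linking number $pq$ (sanity checks: Hopf fibres at $(1,1)$ link once, meridians at $(1,0)$ are unlinked). Applied here, $\tb(L) = \mathrm{lk}(L,L_\varepsilon) = -1$.

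\emph{Rotation number.} The coordinate swap $\sigma\co (z_1,z_2)\mapsto(z_2,z_1)$ is an orientation-preserving diffeomorphism of $S^3$ (an even permutation of the four real coordinates) satisfying $\sigma^*\alphast = \alphast$, hence a strict contactomorphism of $(S^3,\xist)$. It carries $\gamma(t)$ to $\gamma(-t)$, so takes $L$ to itself with reversed orientation. Naturality of $\rot$ under contactomorphisms, combined with the sign-change under orientation reversal of the Legendrian, gives $\rot(L) = -\rot(L)$, hence $\rot(L) = 0$.

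The main obstacle will be the $\tb$ computation: the parallel-torus-knot linking identity requires careful bookkeeping of framings, and the naive Seifert surface obtained by capping the annulus $A$ with the disc $\{(z_1,0) : |z_1| \le 1\}$ is only a topological (not smooth) surface along the spine, so extracting the framing there needs a perturbation or a detour through stereographic projection to the standard model of $(\mathbb{R}^3,\xi_{\mathrm{std}})$. Everything else reduces to direct computation and the symmetry argument.
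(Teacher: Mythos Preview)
Your argument is correct. The Legendrian and isotopy verifications match the paper's, and your $\tb$ computation is a minor variant: the paper pushes off along the Reeb direction $\partial_{\varphi_1}+\partial_{\varphi_2}$, staying on the same Clifford torus, whereas you push off inside $\xist$ along $\partial_{r_1}-\partial_{r_2}$ to an adjacent torus; both pushoffs represent the contact framing and both reduce to the same $(p,q)$-torus-curve linking fact, so there is no substantive difference here. (Your worry about the capped annulus is unnecessary once you invoke the $pq$ formula directly; you never actually need that singular Seifert surface.)

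The genuine divergence is in the rotation number. The paper gives two arguments: first, the Bennequin inequality $|\rot(L)|\le -\tb(L)-1=0$ for a Legendrian unknot forces $\rot(L)=0$ immediately; second, it writes down the global frame $y_2\partial_{x_1}+x_2\partial_{y_1}-y_1\partial_{x_2}-x_1\partial_{y_2}$ of $\xist$ and checks by hand that it agrees with $\dot\gamma$ along~$L$. Your route via the coordinate-swap contactomorphism $\sigma(z_1,z_2)=(z_2,z_1)$, which sends $L$ to $-L$ and hence forces $\rot(L)=-\rot(L)$, is more conceptual and avoids both the Bennequin machinery and the explicit trivialisation. The paper's explicit-frame computation has the advantage that it generalises: the same frame is reused later when comparing the global trivialisation of $\xist$ with the Reeb framing along the spine of $V_{S^3}$, which feeds into the Chern-class calculation. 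Your symmetry argument is cleaner for this lemma in isolation but does not set up that later computation.
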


\begin{proof}
We have $\dot{\gamma}=\partial_{\varphi_1}-\partial_{\varphi_2}$,
which gives $\alphast(\dot{\gamma})=0$, so $L$
is a Legendrian knot. It is a $(1,-1)$-curve on the
Hopf torus
\[ \bigl\{|z_1|=|z_2|=\nicefrac{1}{\sqrt{2}}\bigr\}\subset V_{S^3},\]
and hence topologically isotopic to the spine of $V_{S^3}$.

The push-off $L'$ of $L$ in the Reeb direction is a parallel
$(1,-1)$-curve on the Hopf torus, so the linking number
of $L$ with $L'$ in $S^3$, which by definition
is the Thurston--Bennequin invariant of~$L$, equals~$-1$.

From $\tb(L)=-1$ and $L\subset S^3$ being a topological unknot,
one can conclude $\rot(L)=0$  with the help of
of the Bennequin inequality~\cite[Theorem~4.6.36]{geig08}.
Here is a direct proof.
We need to verify that along
$\gamma$ the vector field $\dot{\gamma}$ does not rotate
relative to a global trivialisation of the plane field~$\xist$.
A global trivialisation of the standard contact structure $\xist$
on $S^3$ is given by the vector field
\begin{equation}
\label{eqn:trivial}
y_2\partial_{x_1}+x_2\partial_{y_1}-y_1\partial_{x_2}-x_1\partial_{y_2}.
\end{equation}
Along the Legendrian curve
\[ \gamma(t)=\frac{1}{\sqrt{2}}(\cos t,\sin t,\cos t,-\sin t)\]
this vector field equals
\[ \frac{1}{\sqrt{2}}\bigl(-\sin t\,\partial_{x_1}+\cos t\,\partial_{y_1}
-\sin t\, \partial_{x_2}-\cos t\,\partial_{y_2}\bigr),\]
which coincides with the velocity vector field $\dot{\gamma}(t)$.
This proves $\rot(L)=0$.
\end{proof}

Legendrian unknots in $(S^3,\xist)$ have been
classified by Eliashberg and Fraser~\cite{elfr09},
see also \cite[Section~3.5]{etho01} or \cite[Theorem~5.1]{geon15}.

\begin{thm}[Eliashberg--Fraser]
\label{thm:EF}
Let $L\subset (S^3,\xist)$ be an oriented Legendrian unknot. Then
$\tb(L)=n$ with $n$ a negative integer, and $\rot(L)$ lies in the range
\[ \{ n+1, n+3,\ldots ,-n-3,-n-1\}.\]
Any such pair of invariants $(\tb,\rot)$ is realised, and it
determines $L$ up to Legendrian isotopy.
\end{thm}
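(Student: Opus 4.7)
\medskip

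The plan is to treat the three claims separately: the range constraints, existence of representatives, and uniqueness up to Legendrian isotopy.

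For the constraints, I would invoke the Bennequin inequality. Any Legendrian unknot $L\subset(S^3,\xist)$ bounds an embedded disc $D$, which is a Seifert surface of genus~$0$. The Bennequin inequality \cite[Theorem~4.6.36]{geig08} yields
\[ \tb(L)+|\rot(L)|\leq 2g(D)-1=-1, \]
so $\tb(L)=n$ is a negative integer and $|\rot(L)|\leq -n-1$. The parity condition $\tb(L)+\rot(L)\equiv 1\pmod 2$ for a null-homologous Legendrian knot (which follows from the relation between these invariants and the relative Euler class of $\xist$ over~$D$) then forces $\rot(L)$ to lie in the stated arithmetic progression.

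For existence, I would start from the Legendrian unknot of the previous lemma with $(\tb,\rot)=(-1,0)$ and apply the two stabilization operations $S_\pm$, each of which modifies a Legendrian knot in a small Darboux chart by inserting a downward zigzag and changes the classical invariants by $(\tb,\rot)\mapsto(\tb-1,\rot\pm 1)$; the topological knot type is preserved. By applying $S_+$ exactly $a$ times and $S_-$ exactly $b$ times in any order (with $a+b=-n-1$ and $a-b=\rot$, which has an integer solution by the parity constraint), I obtain a Legendrian unknot realizing the desired pair.

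The hard part is uniqueness. Here I would follow the standard convex surface strategy. Given a Legendrian unknot $L$, choose a Seifert disc $D$ and perturb it, rel.\ $\partial D=L$, to be convex in the sense of Giroux; this is possible because $L$ has a standard neighbourhood and the twisting $\tb(L)<0$ measures the relative twisting of $\xist$ along $\partial D$. The resulting dividing set $\Gamma_D$ consists of $|\tb(L)|$ properly embedded arcs (plus possibly some closed components, which by Giroux's criterion can be removed, since $(S^3,\xist)$ is tight and hence no overtwisted disc disguised as a bypass occurs in~$D$). Up to diffeomorphism of $D$ rel.\ $\partial D$, the arc configuration is unique once the number of endpoints is fixed. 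Using the Honda--Giroux bypass/flexibility theorem for convex discs with Legendrian boundary, the ambient contact-isotopy class of $L$ is then determined by $\tb(L)$ together with the rotation number, the latter arising as the signed count of the two possible bypass types.

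The main obstacle is this last step: extracting a true Legendrian isotopy (not merely an abstract equivalence of dividing sets) from the combinatorial data, and verifying that positive and negative stabilizations commute so that the assignment $(\tb,\rot)\mapsto L$ is well-defined. This is precisely where the tightness of $(S^3,\xist)$ and the absence of nontrivial bypasses on the standard overtwisted disc enter, and where the argument genuinely uses dimension three; I would invoke Honda's classification of tight contact structures on the solid torus to bridge the gap, much as in the account given in \cite[Section~3.5]{etho01}.
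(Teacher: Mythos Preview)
The paper does not prove this theorem. It is stated as a result of Eliashberg and Fraser, with references to \cite{elfr09}, \cite[Section~3.5]{etho01} and \cite[Theorem~5.1]{geon15}, and is then used as a black box; the only further comment the paper makes is the sentence following the statement, noting that all such unknots arise from the $(\tb,\rot)=(-1,0)$ unknot by stabilisation. So there is no ``paper's own proof'' to compare your proposal against.

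That said, your sketch is a fair outline of how the cited proofs actually proceed: Bennequin gives the range and the parity of $\tb+\rot$, stabilisation gives existence, and the uniqueness step goes via a convex Seifert disc and its dividing set. Two small corrections to your account of the hard step. First, closed dividing curves on a convex disc are not ``removed'': in a tight contact manifold Giroux's criterion forbids them outright, so the dividing set consists of exactly $|\tb|$ arcs from the start. Second, once one has a convex disc with $|\tb|$ dividing arcs, the configuration of arcs is \emph{not} unique rel.\ boundary; one must use bypasses (equivalently, Honda's isotopy discretisation) to normalise it, and it is in this normalisation that $\rot$ appears as the signed count of boundary-parallel arcs of the two signs. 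Your final paragraph gestures at this, but the phrase ``absence of nontrivial bypasses on the standard overtwisted disc'' is off: bypasses do exist and are exactly what one uses; tightness only rules out the trivial bypass that would create a homotopically trivial dividing curve.
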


These Legendrian unknots are obtained from the one with $\tb=-1$
and $\rot=0$ by stabilisation, which is a local process corresponding
to adding zigzags in the front projection. This yields the following result,
which implies that any Legendrian unknot in $S^3$ can be used,
after the identification of $V_{S^3}\subset S^3$ with
$V_0\subset S^1\times S^2$, for the surgery we have in mind.

\begin{prop}
Any Legendrian unknot in $(S^3,\xist)$ can be realised in $V_{S^3}\subset
S^3$ as a knot topologically isotopic in $V_{S^3}$ to the spine of that
solid torus.\qed
\end{prop}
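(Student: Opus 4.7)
The plan is to combine the Eliashberg--Fraser classification (Theorem~\ref{thm:EF}) with the locality of Legendrian stabilisation. By that theorem, every oriented Legendrian unknot in $(S^3,\xist)$ is Legendrian isotopic to a knot obtained from the standard Legendrian unknot $L$ of the previous lemma (the one with $\tb=-1$ and $\rot=0$) by a finite sequence of positive and negative stabilisations. The previous lemma has already placed $L$ inside $V_{S^3}$ as the $(1,-1)$-curve $\gamma$ on the Hopf torus $\{|z_1|=|z_2|=\nicefrac{1}{\sqrt{2}}\}$ and exhibited a topological isotopy in $V_{S^3}$ from $L$ to the spine. So it suffices to show that any knot obtained from $L$ by a sequence of stabilisations can be realised entirely within $V_{S^3}$, while staying topologically isotopic in $V_{S^3}$ to the spine.

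First I would pick a point $p\in L$ and a Darboux ball $B\subset V_{S^3}$ centred at~$p$; such a ball exists since $L$ lies on the Hopf torus $\{|z_2|=\nicefrac{1}{\sqrt{2}}\}$, well inside the interior of $V_{S^3}=\{|z_2|\leq\nicefrac{3}{4}\}$. Stabilisation is a purely local modification---one inserts a zigzag into an arbitrarily small arc of the front projection, see \cite[Section~3.5]{geig08}---so all the required positive and negative stabilisations can be performed inside~$B$. The resulting Legendrian knot $L'$ thus lies entirely inside $V_{S^3}$ and, by the Eliashberg--Fraser theorem, can represent any prescribed pair of classical invariants of a Legendrian unknot.

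It remains to verify that $L'$ is topologically isotopic in $V_{S^3}$ to the spine. Each stabilisation, being carried out inside a small ball $B\subset V_{S^3}$, replaces a short Legendrian arc by a zigzag that cobounds a disc with it inside~$B$; this disc provides a topological isotopy, supported in~$B$, from the stabilised knot back to the un-stabilised one. Concatenating these finitely many local isotopies with the isotopy from $L$ to the spine supplied by the previous lemma yields the desired topological isotopy inside $V_{S^3}$ from $L'$ to the spine. The only point requiring care---and really the only potential obstacle---is that the topological isotopy remain inside $V_{S^3}$ rather than merely inside $S^3$, and this is precisely what is achieved by confining all stabilisations to a Darboux ball contained in $V_{S^3}$.
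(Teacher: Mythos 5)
Your proposal is correct and follows essentially the same route as the paper: the paper's (implicit, one-sentence) proof is exactly the observation that by the Eliashberg--Fraser classification every Legendrian unknot arises from the $\tb=-1$, $\rot=0$ representative already placed in $V_{S^3}$ by finitely many stabilisations, and that stabilisation is a local zigzag move that neither leaves a small neighbourhood of the knot nor changes its topological isotopy class in $V_{S^3}$. Your added care about confining the zigzags to a Darboux ball inside $V_{S^3}$ and tracking the topological isotopy is a welcome spelling-out of details the paper leaves to the reader.
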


We write $L_{n,r}\subset V_0$ for the Legendrian knot
corresponding to a Legendrian unknot in $(S^3,\xist)$
(as in this proposition) with invariants
$\tb=n$ and $\rot=r$ in the range allowed by Theorem~\ref{thm:EF}.

We now define $\omega_{n,r}$ as the odd-symplectic form
on $S^3$ (by Lemma~\ref{lem:surgery-top}), obtained by
performing contact surgery in the sense of
\cite{elia90,wein91} along $L_{n,r}\subset (V_0,\ker\alpha_0)$.
Here we assume that the surgered-out solid torus is contained
in~$V_0$. Then $\omega_{n,r}$ is defined to coincide with
$\omega_{S^2}$ away from the surgery region, and with $\rmd\alpha$
for some extension $\alpha$ of $\alpha_0$ over the glued-in solid torus
as a contact form defining the contact structure obtained
by surgery.

Contact surgery can be interpreted as a symplectic handle
attachment, cf.~\cite[Section~6.2]{geig08},
and then gives rise to a symplectic cobordism from the
old to the surgered contact manifold. Here we are only dealing with
an odd-symplectic manifold where the odd-symplectic form
comes from a contact form near the surgery region, so we have to
specify what we mean by `symplectic cobordism' in this situation.

\begin{defn}
Let $(M_i,\omega_i)$, $i=0,1$, be two odd-symplectic manifolds that are
closed and of the same dimension. A \emph{symplectic cobordism}
from $(M_0,\omega_0)$ to $(M_1,\omega_1)$ is a compact symplectic
manifold $(W,\Omega)$ with boundary $\partial W=M_1-M_0$ and such that
$\Omega|_{TM_i}=\omega_i$, $i=0,1$.
\end{defn}

\begin{rem}
This is the notion of symplectic cobordism between odd-symplectic
manifolds as in \cite[Definition~6.1]{civo15}, except that they
only consider topologically trivial cobordisms. As Cieliebak and
Volkov point out, this notion of symplectic cobordism
is not, in general, reflexive (unlike in the following example).
This leads them to
consider a weaker notion of symplectic cobordism. For our purposes,
the definition above is appropriate. In particular, such symplectic
cobordisms can be composed by the neighbourhood theorem
for hypersurfaces in symplectic manifolds, so the cobordism relation becomes
transitive. See also the discussion in~\cite{geze17}.
\end{rem}

Write $D^2_{\rho}\subset\R^2$ for a $2$-disc of radius~$\rho$,
and $S^1_{\rho}=\partial D^2_{\rho}$ for the circle of radius~$\rho$.
For $\rho=1$ we simply write~$D^2$ and $S^1$. Let $\omega_{\R^2}$
be the standard symplectic form on $\R^2$ or any subset of it.
Then the symplectic $4$-manifold
\[ (W_0,\Omega_0):=
\bigl((D^2\setminus\Int(D^2_{1/2}))\times S^2,\omega_{\R^2}\oplus
\omega_{S^2}\bigr)\]
is a symplectic cobordism between two copies of 
$(S^1\times S^2,\omega_{S^2})$.

By attaching a symplectic handle along the outer copy of
$(S^1\times S^2,\omega_{S^2})$, corresponding to the contact
surgery along $L_{n,r}$, we obtain the following result.

\begin{prop}
\label{prop:cobordism}
There is a symplectic cobordism $(W_{n,r},\Omega_{n,r})$
from $(S^1\times S^2,\omega_{S^2})$ to $(S^3,\omega_{n,r})$.\qed
\end{prop}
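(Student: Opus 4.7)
The plan is to realise $(W_{n,r},\Omega_{n,r})$ as $(W_0,\Omega_0)$ with a standard symplectic $2$-handle attached to the outer boundary along $L_{n,r}\subset V_0\subset S^1\times S^2$. In a collar coordinate $t=r-1$ of that boundary restricted to $V_0$, the symplectic form reads
\[
\Omega_0=(1+t)\,\rmd t\wedge\rmd\theta+\rmd\alpha_0,
\]
using $\omega_{S^2}|_{V_0}=\rmd\alpha_0$; its restriction to the slice $\{t=0\}\times V_0$ is $\rmd\alpha_0$, which is also the restriction of the symplectization form $\Omega_1:=\rmd(\rme^t\alpha_0)$ on $(-\varepsilon,0]\times V_0$.

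The key step is a Moser argument identifying $\Omega_0$ and $\Omega_1$ on a neighbourhood of $V_0$, via a diffeomorphism fixing $V_0$ pointwise. The linear interpolation $\Omega_s=(1-s)\Omega_0+s\Omega_1$ remains symplectic in a neighbourhood of $V_0$: its $\rmd t$-component is $\rmd t\wedge[(1-s)\rmd\theta+s\alpha_0]$, and the bracketed $1$-form evaluates to $1$ on the common Reeb vector field $\partial_\theta$ for every $s\in[0,1]$, whence $\Omega_s$ is non-degenerate along $V_0$. A primitive of $\Omega_0-\Omega_1$ is given by $\sigma=(t+t^2/2)\,\rmd\theta+(1-\rme^t)\alpha_0$, which vanishes as a $1$-form on $\{t=0\}\times V_0$; hence the Moser vector field $X_s$ determined by $\iota_{X_s}\Omega_s=-\sigma$ also vanishes there, and its time-one flow produces the required symplectomorphism~$\psi$.

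With $\psi$ in hand, I would use it to identify a tubular neighbourhood of $L_{n,r}$ in $(W_0,\Omega_0)$ with the corresponding neighbourhood in the symplectization of $(V_0,\ker\alpha_0)$, and then glue on the standard Weinstein $2$-handle $H$ along $L_{n,r}$ with the framing dictated by the contact surgery coefficient, as in \cite[Section~6.2]{geig08}. Set $(W_{n,r},\Omega_{n,r}):=(W_0\cup_\psi H,\,\Omega_0\cup\Omega_H)$. The inner boundary is unchanged and still carries $\omega_{S^2}$. The outer boundary is topologically $S^3$ by Lemma~\ref{lem:surgery-top}, and there $\Omega_{n,r}$ restricts to $\omega_{S^2}$ away from the surgery region and to $\rmd\alpha$ on the glued-in solid torus, where $\alpha$ is the extension of $\alpha_0$ giving the surgered contact structure; this agrees with $\omega_{n,r}$ by definition.

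The main obstacle is the Moser step: $(W_0,\Omega_0)$ is not globally of contact type at its outer boundary, so one must produce a local symplectization model near $V_0$ before Weinstein's construction can be applied. The remainder---the handle attachment itself and the verification of the boundary restrictions---is routine bookkeeping.
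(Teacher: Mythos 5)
Your proposal is correct and follows essentially the same route as the paper: the paper obtains the cobordism by taking the trivial annulus cobordism $(W_0,\Omega_0)$ and attaching the symplectic $2$-handle for the contact surgery along $L_{n,r}$ in the outer boundary, exactly as you do. Your Moser step (with primitive $\sigma=(t+t^2/2)\,\rmd\theta+(1-\rme^t)\alpha_0$ vanishing on the slice) correctly supplies the local symplectisation model near $V_0$ that the paper leaves implicit when it invokes the handle attachment of \cite[Section~6.2]{geig08}.
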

\section{The $d_3$-invariant of tangent $2$-plane fields}
\label{section:d3}
In order to distinguish the odd-symplectic forms $\omega_{n,r}$
on $S^3$ we consider what is essentially the Hopf invariant
of the oriented line field $\ker\omega_{n,r}$. A normalised
section of this line field,
together with a trivialisation of the tangent bundle $TS^3=S^3\times\R^3$,
determines a map $S^3\rightarrow S^2$, and the Hopf invariant would be the
linking number of the preimages of two regular values.

For our purposes, an invariant introduced by Gompf~\cite{gomp98},
and now commonly called the $d_3$-invariant, is more amenable
to computations. It is based on realising a closed $3$-manifold
with a tangent $2$-plane field $\eta$ as the boundary of a compact
almost complex $4$-manifold such that $\eta$ is the complex line field
along the boundary. Since our manifolds come from a construction
via symplectic cobordisms, we are in a natural setting for
computing the $d_3$-invariant. In our application,
the plane field $\eta$ will be the the one complementary to
$\ker\omega_{n,r}$, with orientation defined by the $2$-form $\omega_{n,r}$.

We first recall the definition of the $d_3$-invariant.
Thus, let $Y$ be a closed, oriented $3$-manifold, and $\eta\subset TY$
an oriented tangent $2$-plane field. Suppose that $Y$ is the
boundary of a compact almost complex $4$-manifold $(X,J)$
such that $\eta$ equals the complex line field $TY\cap J(TY)$.
In fact, using obstruction theory, one can show that such
an $(X,J)$ exists for any given $(Y,\eta)$,
see~\cite[Lemma~4.4]{gomp98}; in our situation, this will
be evident by construction.

When the first Chern class $c_1(\eta)$ is a torsion class (hence
trivially in the case $Y=S^3$), one can make sense of
the number $c_1^2(X,J)$ obtained by squaring the first Chern
class of the almost complex structure $J$ on~$X$, even though
$H^2(X;\Z)$ does not have a well-defined intersection pairing.
We shall discuss this issue in the actual computations below.

Write $\sigma(X)$ for the signature of $X$, that is, the signature
of the intersection pairing on $H_2(X;\Z)$. By $\chi(X)$ we denote
the Euler characteristic of~$X$. The following
Theorem is due to Gompf~\cite[Theorem~4.16]{gomp98}; see also
\cite[Section~11.3]{gost99} and~\cite[Section~2]{dgs04}.

\begin{thm}[Gompf]
For $c_1(\eta)$ a torsion class, the rational number
\[ d_3(\eta)=\frac{1}{4}\bigl(c_1^2(X,J)-3\sigma(X)-2\chi(X)\bigr)\]
is a homotopy invariant of~$\eta$.
\end{thm}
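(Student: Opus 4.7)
The plan is to show that the right-hand side
$\tfrac{1}{4}\bigl(c_1^2(X,J)-3\sigma(X)-2\chi(X)\bigr)$
is independent of the choice of almost complex filling $(X,J)$ of
$(Y,\eta)$, and then that it depends only on the homotopy class of~$\eta$.

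First, I would make sense of $c_1^2(X,J)\in\mathbb{Q}$. The hypothesis that
$c_1(\eta)\in H^2(Y;\mathbb{Z})$ is torsion means that the restriction of
$c_1(X,J)$ to $\partial X=Y$ is torsion, so some positive multiple
$k\cdot c_1(X,J)$ lies in the image of
$H^2(X,\partial X;\mathbb{Z})\to H^2(X;\mathbb{Z})$. Choosing a lift
$\tilde{c}\in H^2(X,\partial X;\mathbb{Q})$ of $c_1(X,J)$, I would set
$c_1^2(X,J):=\tilde{c}\smile c_1(X,J)$ evaluated on the relative
fundamental class $[X,\partial X]$. The key point is that this is
well-defined: any two lifts differ by a class pulled back from
$H^2(Y;\mathbb{Q})$, and $H^2(Y;\mathbb{Q})\smile\ker\bigl(H^2(X;\mathbb{Q})\to H^2(Y;\mathbb{Q})\bigr)=0$
by the long exact sequence of the pair.

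Second, for independence of the filling, suppose $(X_0,J_0)$ and
$(X_1,J_1)$ are two almost complex fillings of $(Y,\eta)$. I would form
the closed oriented $4$-manifold
\[ Z=X_0\cup_Y(-X_1),\]
equipped with an almost complex structure $J$ that restricts to $J_0$ on
$X_0$ and to the conjugate of $J_1$ on $-X_1$; the plane fields on $Y$
match up by assumption, and a neighbourhood interpolation smooths the
structure across~$Y$. Then I would combine three additivity facts:
Novikov additivity gives $\sigma(Z)=\sigma(X_0)-\sigma(X_1)$;
$\chi(Z)=\chi(X_0)+\chi(X_1)$ because $\chi(Y)=0$;
and the pairing construction above yields
$c_1^2(Z,J)=c_1^2(X_0,J_0)-c_1^2(X_1,J_1)$, the sign coming from the
orientation reversal on~$-X_1$ (and matching the sign change of $c_1$
under conjugation of the almost complex structure). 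Finally, since $Z$
is a closed almost complex $4$-manifold, Hirzebruch's signature theorem
together with the Wu relation (equivalently Noether's formula)
yields
\[ c_1^2(Z,J)=3\sigma(Z)+2\chi(Z).\]
Combining these gives the equality of the two expressions
$c_1^2(X_i,J_i)-3\sigma(X_i)-2\chi(X_i)$ for $i=0,1$.

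Third, to obtain homotopy invariance of $d_3$ in~$\eta$, I would
observe that if $\eta$ and $\eta'$ are homotopic tangent $2$-plane fields
on~$Y$, any almost complex filling $(X,J)$ of $(Y,\eta)$ can be modified
in a collar neighbourhood of $\partial X$ by rotating $J$ along the
homotopy; this produces an almost complex filling $(X,J')$ of
$(Y,\eta')$ with the same underlying smooth manifold, the same
$c_1$-class on $X$, the same signature and the same Euler
characteristic. The main obstacle I expect is the careful bookkeeping in
step two: reconciling the orientation reversal of $X_1$, the induced
conjugation of $J_1$, the sign of $c_1$, and Novikov's additivity, so
that the signs in
$c_1^2(Z)-3\sigma(Z)-2\chi(Z)=\bigl[c_1^2(X_0)-3\sigma(X_0)-2\chi(X_0)\bigr]-\bigl[c_1^2(X_1)-3\sigma(X_1)-2\chi(X_1)\bigr]$
all come out consistently, after which the Hirzebruch formula for
closed almost complex $4$-manifolds finishes the argument.
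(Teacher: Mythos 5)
This theorem is not proved in the paper at all: it is quoted from Gompf \cite[Theorem~4.16]{gomp98} (with pointers to \cite{gost99} and \cite{dgs04}), so your proposal can only be measured against the known argument. Your first step (defining $c_1^2$ via a lift of $c_1$ to $H^2(X,\partial X;\mathbb{Q})$, well defined because the ambiguity pairs trivially with the image of $H^2(X,\partial X)$) and your third step (deforming $J$ in a collar along a homotopy of plane fields) are fine and standard. The problem is the second step, which is the heart of the matter and which, as written, fails.

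The gap is the assertion that $Z=X_0\cup_Y(-X_1)$ carries an almost complex structure restricting to $J_0$ on $X_0$ and to the conjugate of $J_1$ on $-X_1$. In real dimension $4$, conjugation of an almost complex structure \emph{preserves} the induced orientation (the orientation changes by $(-1)^n$ in complex dimension $n$), so $-J_1$ is an almost complex structure on $X_1$, not on $-X_1$, and $Z$ need not be almost complex at all: for $X_0=X_1=B^4$ one gets $Z=S^4$, which admits no almost complex structure --- precisely because $c_1^2-3\sigma-2\chi=0-0-4\neq 0$ there. Relatedly, even granting your three additivity statements, the telescoping identity you claim is arithmetically false: $\chi$ does not change sign under orientation reversal, so $-2\chi(Z)=-2\chi(X_0)-2\chi(X_1)$ and
\[
c_1^2(Z)-3\sigma(Z)-2\chi(Z)=\bigl[c_1^2(X_0)-3\sigma(X_0)-2\chi(X_0)\bigr]-\bigl[c_1^2(X_1)-3\sigma(X_1)-2\chi(X_1)\bigr]-4\chi(X_1),
\]
with an uncancelled term $4\chi(X_1)$; the two failures are two faces of the same phenomenon. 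Gompf's actual proof does not produce a global almost complex structure on $Z$. Instead it treats $J_0$ and $J_1$ as sections, over $X_0$ and $X_1$ respectively, of the bundle of compatible linear complex structures on $TZ$ (an $S^2$-bundle), which agree along $Y$ because they induce the same plane field $\eta$; the obstruction-theoretic difference class in $H^4(Z;\pi_3(S^2))\cong\mathbb{Z}$ is then shown to account exactly for the discrepancy between the two expressions $c_1^2(X_i,J_i)-3\sigma(X_i)-2\chi(X_i)$, and it vanishes if and only if the normalisation by $\tfrac14$ yields the same rational number for both fillings. The Noether-type identity $c_1^2=3\sigma+2\chi$ for closed almost complex $4$-manifolds is the special case $Y=\emptyset$ of this bookkeeping, not an input one may apply to $Z$. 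To repair your argument you would have to replace the gluing of almost complex structures by this relative obstruction computation (or by the equivalent comparison of the induced $\mathrm{spin}^c$ structures).
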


This invariant is complete in the following sense: if two
tangent $2$-plane fields $\eta_0,\eta_1$ are homotopic over the $2$-skeleton
of $Y$, and $c_1(\eta_0)=c_1(\eta_1)$ is a torsion class,
then $\eta_0$ is homotopic to $\eta_1$ if and only if
$d_3(\eta_0)=d_3(\eta_1)$.

\begin{rem}
For $S^3$ the $d_3$-invariant takes values in $\Z+\nicefrac{1}{2}$.
If we take the trivialisation of $TS^3$ coming from regarding
$S^3$ as the unit quaternions, the relation with the Hopf
invariant $h$ is given by $d_3=-h-\nicefrac{1}{2}$. In particular,
we have $d_3(\xist)=-\nicefrac{1}{2}$, which accords with
the formula for $d_3$ when we regard $(S^3,\xist)$ as the boundary
of the unit ball in~$\C^2$.
\end{rem}

\section{Computing the $d_3$-invariant for the examples}
\label{section:d3-examples}
Given an odd-symplectic form $\omega$ on $S^3$, we define
$d_3(\omega)$ as $d_3(\eta)$ for an oriented tangent
$2$-plane field $\eta$ with $\omega|_{\eta}>0$. In order to
find an almost complex filling of $(S^3,\omega_{n,r})$ as
required for the computation of its $d_3$-invariant,
we start from $D^2\times S^2$ with the obvious (almost)
complex structure as a filling of $(S^1\times S^2,\omega_{S^2})$.
Write $(X_{n,r},J_{n,r})$ for the filling of $(S^3,\omega_{n,r})$
given by the symplectic handle attachment. In statements that
do not depend on the specific choice of $n$ and $r$ we simply
write $(X,J)$.
\subsection{The homology of $X$}
The $4$-manifold $X$ is a handlebody obtained from
a $4$-ball by attaching two $2$-handles, the first one
producing $D^2\times S^2$, the second one corresponding to
the contact surgery. In particular, we have $\chi(X)=3$.

The homology of $X$ is generated by the $2$-spheres
\[ S^2_0=\{0\}\times S^2\subset D^2\times S^2\subset X\]
and $S^2_1$, made up of the core disc of the second handle and
a Seifert disc bounded by the Legendrian knot
$L=L_{n,r}$ in $D^2\times S^2$. This
Seifert disc is topologically isotopic to $D^2\times\{*\}$ in $D^2\times S^2$
(via an isotopy keeping the boundary of the disc in $S^1\times S^2$).
 
For the computation of the
signature $\sigma(X)$, we need to understand the intersection numbers
of the generating $2$-spheres. The $2$-sphere $S^2_0$ is
given the orientation induced by $\omega_{S^2}$, i.e.\
the orientation of the $S^2$-factor in $S^1\times S^2$.
The orientation chosen on $S^2_1$ is the one that coincides
with the positive orientation of $D^2\times\{*\}$ over the
Seifert disc (and with $L$ as the positive boundary of
the Seifert disc).
We then clearly have
\[ S^2_0\bullet S^2_0=0\]
and
\[ S^2_0\bullet S^2_1=1.\]
Hence, no matter what the intersection number $S^2_1\bullet S^2_1$,
the intersection matrix will have signature $\sigma(X)=0$. For the
subsequent computations, however, we need to establish the value
of this intersection number.

The linking number of any two orbits of the standard Reeb vector field
$\Rst$ in $S^3\subset\C^2$ equals 1: the two orbits in either complex
coordinate plane make up a positive Hopf link; the other orbits
are $(1,1)$-curves on the Hopf tori. Under the identification of
$V_{S^3}$ with $V_0$, the Reeb orbits are sent to curves in the class
$S^1\times\{*\}$. This means that the $2$-discs in $D^2\times S^2$
bounded by $L$ and its push-off $L'$ transverse to the contact structure
intersect in one point less than the corresponding discs in $S^3$;
the latter intersection number is the linking number of $L$ and
$L'$ in~$S^3$, which is $\tb(L)$ by definition.
The framing for the contact surgery is $-1$ (i.e.\ one left twist)
relative to the contact framing defined by this push-off~$L'$,
see~\cite[Example~6.2.7]{geig08}. In the surgery handle,
$L$ bounds the core disc, and its push-off with the extra negative
twist a $2$-disc parallel to the core disc. It follows that
the self-intersection number of $S^2_1$ is accounted for by
the intersection of the mentioned discs in $D^2\times S^2$, hence
\[ S^2_1\bullet S^2_1=\tb(L_{n,r})-2=n-2,\]
where $\tb$ is computed in~$(S^3,\xist)$.

So the intersection form on $H_2(X;\Z)$ is given by
\begin{equation}
\label{eqn:linking}
Q_n:=\begin{pmatrix}
0 & 1\\
1 & n-2
\end{pmatrix}.
\end{equation}
\subsection{The first Chern class}
We now want to compute $c_1=c_1(X,J)$. To this end, we need to understand
the restriction of $TX$ to $S^2_0$ and $S^2_1$. Along $S^2_0$,
the tangent bundle $TX$ splits as $TS^2_0\oplus\C$, hence
\[ \langle c_1,[S^2_0]\rangle =2.\]

For $S^2_1$, the argument is more involved and analogous
to the proof of \cite[Proposition~2.3]{gomp98}.
The symplectic $2$-handle $D^2\times D^2$ for the contact
surgery (which we attach along its lower boundary $\partial D^2\times D^2$
to $D^2\times S^2$), regarded as an almost complex manifold,
is given by $D^2\times D^2\subset\R^2\times\rmi\R^2$,
see \cite[Figure~6.4]{geig08}. Write $q_1,q_2$ for the cartesian
coordinates of the first $\R^2$-factor.

Along the attaching circle of the $2$-handle, that is,
\[ \partial D^2\times\{0\}\subset \partial D^2\times D^2
\subset D^2\times D^2,\]
we take the complex trivialisation of the tangent bundle
of $D^2\times D^2$ given by the tangent vector $\tau$ to
the attaching circle and the outward normal vector $\nu$.
This frame differs from the product frame $\partial_{q_1},\partial_{q_2}$
by a generator of $\pi_1(\SO(2))$. It follows that $\tau,\nu$ extends
as a complex frame over $D^2\times D^2$, since
\[ \SO(2)\subset \SU(2)\subset \U(2),\]
and $\SU(2)$ is simply connected.

\begin{figure}[h]
\labellist
\small\hair 2pt
\pinlabel $\nu$ [b] at 189 325
\pinlabel $\nu$ [t] at 396 172
\pinlabel ${V_0\subset S^1\times D^2}$ [l] at 433 322
\pinlabel $\text{$2$-handle}$ [bl] at 451 192
\pinlabel $S^2_1$ [bl] at 235 270
\pinlabel $\C^2_{x_2,y_2}$ [r] at 287 496
\pinlabel $\C^2_{x_1,y_1}$ [t] at 560 251
\pinlabel ${D^2\times S^2}$ at 350 436
\endlabellist
\centering
\includegraphics[scale=0.6]{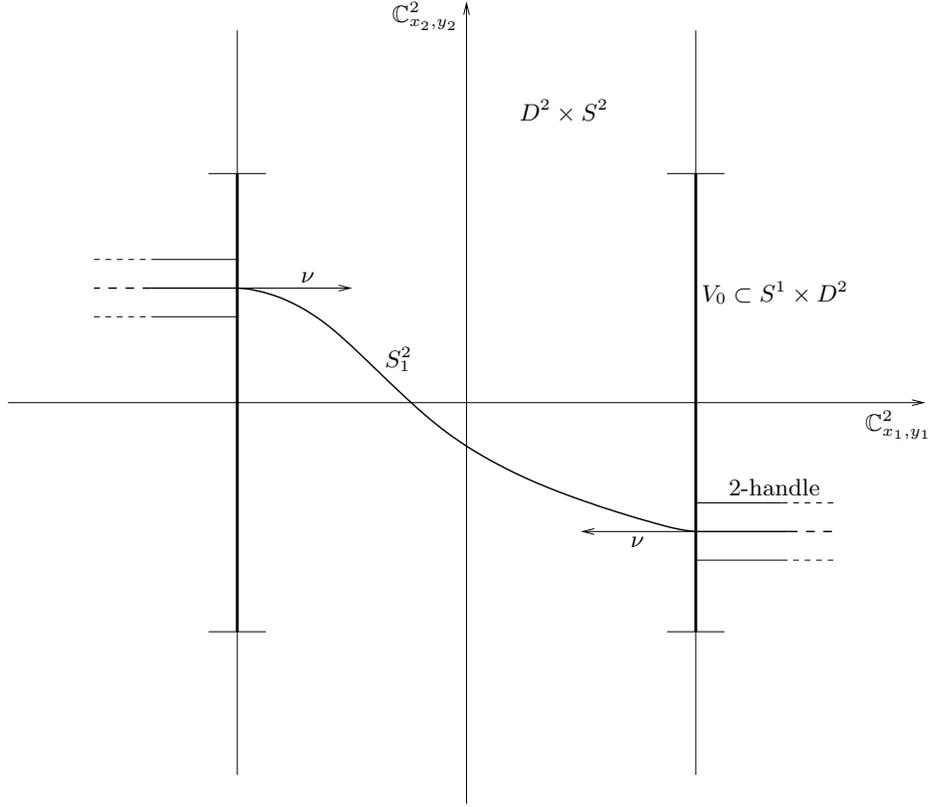}
  \caption{Computing $c_1$ from the handle attachment}
  \label{figure:trivialisations}
\end{figure}

For the following discussion and the notation used
see Figure~\ref{figure:trivialisations}, which shows the
Seifert disc bounded by $L$ in $D^2\times S^2$ as part of the
$2$-sphere $S^2_1$. Recall that $S^2_1$ is given the orientation
which restricts to the positive orientation of that disc.
The second $\C$-factor is regarded as a subset of $S^2$
in $D^2\times S^2$.

The almost complex structure that extends over a symplectic handle
corresponding to a contact surgery is the one coming from the
symplectisation, i.e.\ it is one which preserves the contact planes
and sends the normal direction to the contact manifold to
the Reeb direction (or, up to homotopy, to any direction
in the contact manifold transverse to the contact structure).

The $S^1$-factor in $V_0=S^1\times D^2$ is transverse to
the contact structure that we use to define the surgery, so
the contact planes are homotopic (via planes transverse to the
$S^1$-factor) to planes tangent to the $D^2$-factor.
Hence, up to homotopy, we may assume that the almost complex structure
on $D^2\times S^2\subset X$ is simply the one corresponding to
this product structure, and that the vector field
$\tau$ is tangent to the second factor

\begin{lem}
The vector fields $\nu$ over the $2$-handle and $\partial_{x_1}$
over the Seifert disc of $L$ in $D^2\times S^2$
span a complex line bundle $\LL_1$ over $S^2_1$
with $c_1(\LL_1)=1$.
\end{lem}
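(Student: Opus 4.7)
The line bundle $\LL_1$ is presented over the $2$-sphere $S^2_1$ by two disc trivialisations: $\nu$ over the core disc of the $2$-handle, and $\partial_{x_1}$ over the Seifert disc in $D^2\times S^2$, meeting along the equator $L$. My plan is to compute $c_1(\LL_1)$ as the degree of the clutching map $L\to\U(1)$ encoded in this presentation.

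First I would verify that $\LL_1$ is a genuine line bundle, i.e.\ that the complex spans $\mathrm{span}_{\C}(\nu)$ and $\mathrm{span}_{\C}(\partial_{x_1})$ coincide at each point of $L$. After homotoping the almost complex structure on $D^2\times S^2$ near the attaching region to the product structure (with $\tau$ tangent to the $S^2$-factor), as prepared in the paragraph preceding the lemma, both $\nu$ and $\partial_{x_1}$ lie in the complex line of $TX|_L$ complementary to $\mathrm{span}_{\C}(\tau)$, and therefore span the same complex subspace.

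Next I would read off the winding number of $\nu$ relative to $\partial_{x_1}$ along $L$. By the paragraph preceding the lemma, the frame $(\tau,\nu)$ on the attaching circle differs from the coordinate frame $(\partial_{q_1},\partial_{q_2})$ on $D^2\times D^2\subset\R^2\times\rmi\R^2$ by an element of $\pi_1(\SO(2))$ of degree~$1$. Choosing the attaching identification so that $\partial_{q_2}$ corresponds, through complex frames, to the fixed direction $\partial_{x_1}$ coming from the $\C^2$-identification $V_{S^3}\cong V_0$, the rotation of $(\tau,\nu)$ against $(\partial_{q_1},\partial_{q_2})$ then equals the winding of $\nu$ against $\partial_{x_1}$. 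Hence $c_1(\LL_1)=1$, with sign fixed by the chosen orientation of $S^2_1$.

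The main obstacle is the bookkeeping of the identifications at play: the contact surgery attaching map, the passage between the symplectisation and product almost complex structures, and the match between handle coordinates $(q_1,q_2,p_1,p_2)$ and ambient $\C^2$-coordinates along $L=\gamma(S^1)$. Once these are nailed down, the lemma is the statement that the only source of winding in the clutching function is the single $\SO(2)$-twist already built into the frame $(\tau,\nu)$; in particular, the answer is independent of the stabilisation parameters $(n,r)$, because those will feed into the first Chern number of the complementary line bundle spanned by $\tau$ rather than of $\LL_1$.
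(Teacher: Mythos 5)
Your overall architecture matches the paper's: present $\LL_1$ by the two disc trivialisations $\nu$ (core disc) and $\partial_{x_1}$ (Seifert disc), check that the two complex spans agree along~$L$ (your argument via complementarity to $\mathrm{span}_{\C}(\tau)$ is fine and is what the paper means by ``both lie in the first $\C$-factor''), and read off $c_1(\LL_1)$ as the relative winding along~$L$. The problem is in how you compute that winding. You claim that the generator of $\pi_1(\SO(2))$ by which $(\tau,\nu)$ differs from $(\partial_{q_1},\partial_{q_2})$ \emph{is} the winding of $\nu$ against $\partial_{x_1}$, after ``choosing the attaching identification so that $\partial_{q_2}$ corresponds to $\partial_{x_1}$''. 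No such identification exists along the whole attaching circle: writing $\tau=-q_2\partial_{q_1}+q_1\partial_{q_2}$ and $\nu=q_1\partial_{q_1}+q_2\partial_{q_2}$, one has $\partial_{q_2}=q_1\tau+q_2\nu$, and under the attaching map $\tau$ lands in $\LL_2$ (the contact plane) while $\nu$ lands in $\LL_1$; so $\partial_{q_2}$ is mapped to a vector that rotates between the two complex factors and cannot equal a constant multiple of $\partial_{x_1}\in\LL_1$. More tellingly, the $\SO(2)$-loop sits in $\SU(2)$, hence has determinant degree~$0$; if one could distribute its twist as you propose, the same reasoning would give winding $+1$ for $\tau$ against $\partial_{x_2}$ in $\LL_2$, for a total of $+2$, contradicting degree~$0$ (and contradicting the companion computation $c_1(\LL_2)=r-2$). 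In the paper, the $\SO(2)$-observation serves only to show that the frame $(\tau,\nu)$ extends over the handle; it contributes nothing to~$c_1$.

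The twist actually comes from the other side of the gluing. Along $L$ the line $\LL_1$ is (after the homotopy of $J$ to the product structure) the tangent space to the $D^2$-factor of $D^2\times S^2$, and $\nu$, being everywhere transverse to the boundary hypersurface $S^1\times S^2$ and pointing consistently to one side, is homotopic through nonvanishing sections of $\LL_1|_L$ to $\pm\partial_r$, the radial field of that $D^2$-factor. Since $L$ is isotopic to $S^1\times\{*\}$ and hence has winding number $+1$ around the $S^1$-factor, $\partial_r=\cos\theta\,\partial_{x_1}+\sin\theta\,\partial_{y_1}$ makes exactly one positive rotation against the constant frame $\partial_{x_1}$ as $L$ is traversed once. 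That is the ``one positive twist'' of the paper, and it gives $c_1(\LL_1)=1$. So your conclusion is correct and the framework is right, but the step that produces the number~$1$ needs to be replaced.
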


\begin{proof}
Along the attaching circle of the $2$-handle, which is identified
with the Legendrian knot $L$, both $\nu$ and $\partial_{x_1}$
lie in the first $\C$-factor, so they span a complex line bundle
$\LL_1$ over all of $S^2_1$. The frame $\partial_{x_1}$ along $L$
extends over the Seifert disc. The vector field $\nu$, which is defined
over the part of $S^2_1$ made up by the core disc
in the handle, makes one positive twist relative to $\partial_{x_1}$
along~$L$. From the interpretation of the first Chern class as a
relative Euler class it follows that $c_1(\LL_1)=1$.
\end{proof}

\begin{lem}
The vector fields $\tau$ over the $2$-handle and $\partial_{x_2}$
over the Seifert disc of $L=L_{n,r}$ span a complex line bundle
$\LL_2$ with $c_1(\LL_2)=\rot(L_{n,r})-2=r-2$.
\end{lem}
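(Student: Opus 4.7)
The plan is to mirror the proof for $\LL_1$: view $c_1(\LL_2)$ as a winding number along $L$ and then evaluate it in terms of $\rot(L_{n,r})$ using the explicit global trivialization of $\xist$ given in~\eqref{eqn:trivial}.

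First I would verify that $\LL_2$ is a well-defined complex line bundle. After the homotopy of the almost complex structure described above, $\tau$ is tangent to the $S^2$-factor of $D^2\times S^2$, and $\partial_{x_2}$ is tangent to this factor by construction. Hence along $L$ both sections lie in the same complex line (the second $\C$-factor) and are non-vanishing over the core disc, respectively the Seifert disc. Exactly as in the proof for $\LL_1$, interpreting the first Chern class as a relative Euler class yields
\[
 c_1(\LL_2) = \mathrm{wind}_L(\tau/\partial_{x_2}),
\]
with the same orientation convention for $L$ as in that lemma.

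The main step is to split this winding via an auxiliary section: let $T_{(2)}$ denote the projection to the second $\C$-factor of the trivialization~\eqref{eqn:trivial} of $\xist$, pulled back to $V_0$ via the identification with $V_{S^3}$ and homotoped along with the almost complex structure. Since homotopies preserve the winding of non-vanishing sections of a complex line bundle and since $\mathrm{wind}_L(\dot\gamma/T)=\rot(L_{n,r})=r$ in $\xist$ by definition of the rotation number, we obtain
\[
 \mathrm{wind}_L(\tau/\partial_{x_2}) \;=\; \mathrm{wind}_L(\tau/T_{(2)}) \;+\; \mathrm{wind}_L(T_{(2)}/\partial_{x_2}) \;=\; r \;+\; \mathrm{wind}_L(T_{(2)}/\partial_{x_2}).
\]

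It remains to show that the last term equals~$-2$. Since $L$ is topologically isotopic in $V_0$ to the spine $S^1\times\{0\}$ and $T_{(2)}$ is smooth and non-vanishing, the winding may be computed on the spine. Working in the explicit model
\[
 f\co V_0\longrightarrow V_{S^3},\qquad (\theta,w)\longmapsto\bigl(\sqrt{1-|w|^2}\,\rme^{\rmi\theta},\, w\rme^{\rmi\theta}\bigr)
\]
(up to the area-preserving rescaling of the transverse disc), one has $f(\theta,0)=(\rme^{\rmi\theta},0)$, and inverting $\rmd f$ on the spine gives $(f^{-1})_*T|_{(\theta,0)}=-\rmi\,\rme^{-2\rmi\theta}\,\partial_{x_w}$ up to a positive real scalar. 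Its complex coefficient winds $-2$ times as $\theta$ traverses $[0,2\pi]$, giving $c_1(\LL_2)=r-2$.

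The main obstacle, beyond careful bookkeeping of signs and the orientation convention inherited from the $\LL_1$ argument, is producing the factor $-2$ cleanly: one $-1$ comes from the $\varphi_1$-dependence of $T$ (the trivialization itself rotates once along the Reeb-orbit spine), and a further $-1$ arises because $\partial_{x_w}$ on $V_0$ pushes forward under $\rmd f$ to a vector field that rotates in the $\varphi_2$-direction by the Reeb flow. Both Hopf-fibre directions of $\Rst=\partial_{\varphi_1}+\partial_{\varphi_2}$ thus contribute a twist, explaining the doubling.
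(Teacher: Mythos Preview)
Your proof is correct and follows essentially the same strategy as the paper: split the winding of $\tau$ relative to $\partial_{x_2}$ through the global trivialisation~\eqref{eqn:trivial}, identify the first summand as $\rot(L_{n,r})$, and show the second equals~$-2$ by computing on the spine. The only difference is presentational: where the paper argues geometrically via linking numbers (the global frame has linking $-1$ with the spine in~$S^3$, the Reeb framing has linking $+1$, so the Reeb framing---which becomes $\partial_{x_2}$ in~$V_0$---twists $+2$ relative to the global frame), you carry out the equivalent computation explicitly by writing down the identification $f\co V_0\to V_{S^3}$ and pulling back. Your closing remark about the two $-1$ contributions from $\partial_{\varphi_1}$ and $\partial_{\varphi_2}$ is exactly the content of the paper's two-step comparison.
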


\begin{proof}
As discussed above, we may think of $\tau$ along $L$ as a vector field
in the second $\C$-factor, as is~$\partial_{x_2}$. So the two vector
fields define a complex line bundle $\LL_2$ over~$S^2_1$.

Along the spine of $V_{S^3}$, i.e.\ the Reeb orbit $\theta\mapsto
(\rme^{\rmi\theta},0)$, the vector field in (\ref{eqn:trivial}) 
becomes
\[ -\sin\theta\, \partial_{x_2}-\cos\theta\, \partial_{y_2}.\]
Recall that this vector field defines a global trivialisation of $\xist$,
and the rotation number of an oriented Legendrian knot in
$(S^3,\xist)$ counts the rotations of the velocity vector relative to
this frame. From the expression above we see that this frame
makes one left-handed twist relative to the Seifert framing
of the spine given by~$\partial_{x_2}$. In other words, a push-off
of the spine in the direction of the global frame of $\xist$
has linking number~$-1$ in $S^3$ with the spine.

On the other hand, the Reeb orbits of~$\Rst$, which make up the
Hopf fibration, have pairwise linking number~$+1$.
This means that the `Reeb framing' of $\xist$ over $V_{S^3}$
given by parallel Reeb orbits makes two positive twist
relative to the global framing. Hence, the velocity vector field
of a oriented Legendrian knot in $V_{S^3}$ makes $\rot-2$
rotations relative to the Reeb framing.

In $V_0\subset S^1\times S^2$ the Reeb orbits are of the form
$S^1\times\{*\}$, so the Reeb framing corresponds to the vector field
$\partial_{x_2}$. Our discussion shows that $\tau$ makes
$r-2$ twists relative to this frame, which implies
$c_1(\LL_2)=r-2$.
\end{proof}

Thus, over the sphere $S^2_0$ the tangent bundle splits
as $\LL_1\oplus\LL_2$, and the two preceding lemmata imply that
$c_1(X,J)$ satisfies
\[ \langle c_1,[S^2_1]\rangle=r-1.\]
\subsection{Computing $c_1^2$ and $d_3$}
The general procedure for computing $c_1^2(X,J)$ when
$c_1(X,J)$ restricts to a torsion class on the boundary $\partial X$
is explained in \cite[Section~2]{dgs04}. In our situation,
where $\partial X=S^3$, one can give a more direct argument.

The homology long exact sequence of the pair $(X,\partial X=S^3)$,
with integral coefficients understood, gives us an isomorphism
\[ \varphi\co H_2(X)\longrightarrow H_2(X,\partial X).\]
Given a cohomology class $c\in H^2(X)$, there is a unique
homology class $C\in H_2(X)$ with $\varphi (C)=\PD(c)$,
where $\PD\co H^2(X)\rightarrow H_2(X,\partial X)$ is the
Poincar\'e duality isomorphism. Then $c^2$ is defined as
the intersection number $C^2:=C\bullet C\in\Z$.

As mentioned before, $H_2(X)$ is generated by the homology
classes $[S^2_0]$ and $[S^2_1]$. The relative homology group
$H_2(X,\partial X)$ is generated by the classes $[N_0],[N_1]$
of normal discs $N_0,N_1$ to the knots along which the
$2$-handles are attached to a $4$-ball~$D^4$. Here the handle which gives
rise to $N_0$ is the one attached along a trivial knot in $D^4$
to produce $D^2\times S^2$, that is, we may take $N_0=D^2\times\{*\}
\subset D^2\times S^2$, and we choose this disc with
$\partial D^2\times\{*\}$ disjoint from $V_0$, where the second
handle is attached. The orientations of $N_0,N_1$ are chosen such that
\[ N_0\bullet S^2_0=1=N_1\bullet S^2_1.\]
In terms of the bases $[S^2_0],[S^2_1]$ and $[N_0],[N_1]$
for $H_2(X)$ and $H_2(X,\partial X)$, respectively, the
isomorphism $\varphi$ is given by the linking matrix $Q_n$
shown in~(\ref{eqn:linking}).

The Poincar\'e dual of $c_1$ satisfies
\[ c_1(S^2_i):=\langle c_1,[S^2_i]\rangle=\PD(c_1)\bullet [S^2_i],\]
which implies that
\[ \PD(c_1)=c_1(S^2_0)\cdot [N_0]+c_1(S^2_1)\cdot [N_1]=
2[N_0]+(r-1)[N_1].\]
Hence, the class $C_1=a_0[S^2_0]+a_1[S^2_1]$ with $\varphi(C_1)=\PD(c_1)$
is found by solving
\[ \begin{pmatrix}0&1\\ 1&n-2\end{pmatrix}
\begin{pmatrix}a_0\\a_1\end{pmatrix}=
\begin{pmatrix}2\\r-1\end{pmatrix}.\]
This gives $a_0=r-2n+3$ and $a_1=2$. It follows that
\[ c_1^2=C_1^2=
(a_0,a_1)
\begin{pmatrix}0&1\\ 1&n-2\end{pmatrix}
\begin{pmatrix}a_0\\a_1\end{pmatrix}=
(r-2n+3,2)\begin{pmatrix}2\\r-1\end{pmatrix}=4r-4n+4\]
and
\begin{eqnarray*}
d_3(\omega_{n,r}) & = & \frac{1}{4}\bigl(c_1^2(X_{n,r},J_{n,r})-
                          3\sigma(X_{n,r})-2\chi(X_{n,r})\bigr)\\
  & = & r-n-\frac{1}{2}.
\end{eqnarray*}
With Theorem~\ref{thm:EF} we conclude that the values
\[ \nicefrac{1}{2},\;\nicefrac{5}{2},\;\nicefrac{9}{2},\ldots\]
can be realised as $d_3$-invariant. This proves the part of
Theorem~\ref{thm:main} saying that we can obtain
infinitely many distinct odd-symplectic forms on $S^3$
via our surgery construction.

\begin{rem}
Given an odd-symplectic manifold $(M^{2n-1},\omega)$ and a $1$-form
$\beta$ on $M$ with $\beta\wedge\omega^{n-1}>0$, one can form
the \emph{symplectisation} $\bigl((-\varepsilon,\varepsilon)\times M,
\omega+\rmd(t\beta)\bigl)$ for $\varepsilon>0$
sufficiently small. Since $(S^3,\omega_{n,r})$ is distinguished
homotopically from $(S^3,\omegast|_{TS^3})$, there can be no
symplectic embedding of a symplectisation of the former into
one of the latter, and no homotopically nontrivial
symplectic embedding in the other direction. Any such embedding
would give rise to a topologically trivial
symplectic cobordism between the two odd-symplectic manifolds, and hence
to a homotopy of odd-symplectic forms.
\end{rem}
\section{Proof of Theorem~\ref{thm:main}}
\label{section:proof}
Let $\omega=\omega_{n,r}$ be one of the odd-symplectic forms on $S^3$
coming from our construction, and let $(W,\Omega)=(W_{n,r},\Omega_{n,r})$
be the corresponding cobordism from $(S^1\times S^2,\omega_{S^2})$
to $(S^3,\omega)$ as in Proposition~\ref{prop:cobordism}.

By \cite[Lemma~5]{geze17}, there is a symplectic cobordism
from $(-S^1\times S^2,\omega_{S^2})$ to $(S^1\times S^2,\omega_{S^2})$,
which amounts to a cobordism from the empty set to two copies
of $(S^1\times S^2,\omega_{S^2})$. By gluing a copy
of $(W,\Omega)$ along each boundary component we obtain a
compact symplectic manifold with two boundary components
$(S^3,\omega)$.

Now, assuming there were a symplectic cobordism as in
Question~\ref{quest:Fish} from $(S^3,\omega)$ to $(S^3,\xist)$,
we could glue a copy of this cobordism to each boundary of the
previously constructed symplectic manifold. The result would be
a weak symplectic filling of a disjoint union $(S^3,\xist)\sqcup
(S^3,\xist)$, which cannot exist by~\cite[Theorem~1.4]{mcdu91}.

This contradiction proves that the answer to Fish's question is negative
for the odd-symplectic forms~$\omega_{n,r}$ on~$S^3$,
which completes the proof of Theorem~\ref{thm:main}.
\section{On the dynamics of the examples}
\label{section:dynamics}
Outside the surgery region, the odd-symplectic forms $\omega_{n,r}$
coincide with $\omega_{S^2}$, whose characteristic direction
is given by~$\partial_{\theta}$. In particular,
on $S^1\times S^2\setminus V_0$ (which we may regard as a subset
of the surgered~$S^3$), the characteristic flow of $\omega_{n,r}$
is periodic and certainly not minimal.

We now modify $\omega=\omega_{n,r}$ into an odd-symplectic form
whose non-minimality is no longer apparent, but which still
does not admit a cobordism as requested by Fish. For these examples, then,
minimality is an open question.

Write $V\subset V_0$ for the solid torus containing
$L_{n,r}$ that we cut out when we perform the
surgery (equivalently, the image of the attaching map
of the $2$-handle). Write $\pi\co S^1\times S^2\rightarrow S^2$
for the projection onto the $S^2$-factor. Define $A:=\pi^{-1}(\pi(V))$;
by $\calO(A)\subset V_0$ we denote a slight thickening of
the $S^1$-invariant set~$A$.
Let $\psi\co S^1\times S^2\rightarrow\R$ be a smooth function,
independent of the $S^1$-coordinate, and let $X_{\psi}$
be the $S^1$-invariant vector field which on each slice $\{*\}\times S^2$
equals the Hamiltonian vector field of~$\psi$, that is,
\[ \omega_{S^2}(X_{\psi},\,.\,)=-\rmd\psi.\]
We may choose $\psi$ in such a way that $\rmd\psi$ does not
have any zeros outside~$A$, and such that
each level set of $\psi$ (and hence each flow line of~$X_{\psi}$) meets~$A$.

On $S^1\times S^2$ we define a $1$-form $\beta_0$ by setting it
equal to the contact form $\alpha_0$ on $A$, equal to
$\psi\,\rmd\theta$ on $S^1\times S^2\setminus\calO(A)$,
and as a convex interpolation on the collar region in between.
Then $\beta_0\wedge\omega_{S^2}>0$ globally on $S^1\times S^2$.
After the surgery, where $\alpha_0$ is extended over the
glued-in solid torus as a contact form~$\alpha$, this extension
defines a $1$-form $\beta$ with $\beta\wedge\omega$ globally on~$S^3$,
and still $\beta=\psi\,\rmd\theta$ on $S^1\times S^2\setminus\calO(A)$,
now regarded as a subset of~$S^3$.

The $2$-form
\[ \Omega:=\omega+\rmd(t\beta)=\omega+\rmd t\wedge\beta+t\,\rmd\beta\]
is a symplectic form on $[0,\varepsilon]\times S^3$ for
$\varepsilon>0$ sufficiently small, and it pulls back to
the odd-symplectic forms $\omega$ on $S^3\times\{0\}$ and
to $\omega_{\varepsilon}:=\omega+\varepsilon\,\rmd\beta$
on $S^3\times\{\varepsilon\}$, respectively.
The odd-symplectic manifold
$(S^3,\omega_{\varepsilon})$ does not admit a cobordism
as in Question~\ref{quest:Fish}, for otherwise
we would obtain one for $(S^3,\omega)$ by concatenation.

On $S^1\times S^2\setminus\calO(A)\subset S^3$ we have
\[ \omega_{\varepsilon}=\omega+\varepsilon\,\rmd(\psi\,\rmd\theta)=
\omega+\varepsilon\,\rmd\psi\wedge\rmd\theta.\]
The characteristic direction of this odd-symplectic form is
given by $\partial_{\theta}-\varepsilon X_{\psi}$. By our choice of~$\psi$,
this means all characteristics intersect $\calO(A)$, and their global
dynamics can no longer be controlled.

\begin{ack}
We are grateful to Joel Fish for suggesting this problem about the
existence of symplectic cobordisms. We also thank Alberto Abbondandolo,
Peter Albers and Ana Rechtman for useful conversations, and Marc Kegel
for his critical reading of a draft version of this paper and several
constructive comments.
\end{ack}

\end{document}